\newcommand\numberthis{\addtocounter{equation}{1}\tag{\theequation}}
\newcommand{\8}{\infty}
\newcommand{\R}{\mathbb{R}}
\newcommand{\N}{\mathbb{N}}
\newcommand{\s}{\mathbb{S}}
\newtheorem{mylem}{Lemma}
\newtheorem{mythm}{Theorem}
\title{\LARGE \bf
A Convex Approach for Stability Analysis of Coupled PDEs with Spatially Dependent Coefficients
}
\author{Evgeny Meyer and Matthew M. Peet
\thanks{This work was supported by the National Science Foundation under Grants No. 1301851 and 1301660}
\thanks{E. Meyer is a Ph.D student with the School for Engineering of
Matter, Transport and Energy, Arizona State University, Tempe, AZ, 85281
USA
        {\tt\small edmeyer@asu.edu}}%
\thanks{ M. M. Peet is an assistant professor with the School for Engineering
of Matter, Transport and Energy, Arizona State University, Tempe, AZ,
85281 USA
        {\tt\small mpeet@asu.edu}}%
}
\begin{document}

\maketitle
\thispagestyle{empty}
\pagestyle{empty}

\begin{abstract}

In this paper, we present a methodology for stability analysis of a general class of systems defined by coupled Partial Differential Equations (PDEs) with spatially dependent coefficients and a general class of boundary conditions. This class includes PDEs of the parabolic, elliptic and hyperbolic type as well as coupled systems without boundary feedback. Our approach uses positive matrices to parameterize a new class of SOS Lyapunov functionals and combines these with a parametrization of projection operators which allow us to enforce positivity and negativity on subspaces of $L_2$. The result allows us to express Lyapunov stability conditions as a set of Linear Matrix Inequality (LMI) constraints which can be constructed using SOSTOOLS and tested using Semi-Definite Programming (SDP) solvers such as SeDuMi or Mosek. The methodology is tested using several simple numerical examples and compared with results obtained from simulation using a standard form of numerical discretization.
\end{abstract}

\section{INTRODUCTION}

Partial Differential Equations (PDEs) are often used to model systems in which the quantity of interest varies continuously in both space and time.
Examples of such quantities include: deflection of beams (Euler-Bernoulli equation); velocity and pressure of fluid flow (Navier-Stokes equations); and population density in predator-prey models.
See \cite{evans1998}, \cite{garabedian1964partial} and \cite{john1982partial} for a wide range of examples.

In this paper we address the stability analysis of a general class of coupled linear PDEs with spatially dependent coefficients, i.e.
\[
u_t(t,x)=A(x)u_{xx}(t,x)+B(x)u_x(t,x)+C(x)u(t,x),\numberthis{\label{class}}
\]
where $u:[0,\8)\times[a,b]\to\R^n$ and $A,B,C$ are polynomial matrices. Boundary conditions have the general form $D\cdot [u(a), u(b), u_x(a),  u_x(b)]^T=0$ where $D \in \R^{m\times 4n}$. PDEs expressed in this form can be of the parabolic, elliptic, or hyperbolic type. As can be seen in Section 3 such PDEs as, for example, Schrodinger and accoustic wave equations can be expressed in the form of Equation~\eqref{class}.

Recently, there has been some progress in theory of analyzing and controlling PDE models of this form. First, we note the development of a theory of state-space for systems of PDEs or DDEs called Semigroup Theory, wherein the state of the system belongs to a certain space of functions. The solution map for these systems is an operator-valued function (``strongly continuous semigroup'' - SCS), indexed to the time domain, which maps the current state to a future state. For an introduction to Semigroup Theory we refer readers to \cite{lasiecka1980unified}, \cite{curtain1995introduction}.

In the semigroup framework, stability, controllability and observability conditions can be expressed using operator inequalities in the same way that LMIs are used to represent those properties for ODEs. As an example, for a system $\dot u=\mathcal{A}u$ which defines a SCS on a Hilbert space $X$ with $\mathcal{A}$ being the infinitesimal generator, the exponential stability of the system is equivalent to the existence of a positive bounded linear operator $\mathcal{P}:X\to X$ such that
\[
\left<u,\mathcal{AP}u\right>_X+\left<\mathcal{A}u,\mathcal{P}u\right>_X\le - \left<u,u\right>_X\numberthis{\label{LOI}}
\]
for all $u$ in the domain of $\mathcal{A}$. Condition (\ref{LOI}) is termed a Linear Operator Inequality (LOI).
The terminology LOI is deliberately chosen to suggest a parallel to the use of Linear Matrix Inequalities (LMIs) for computational analysis and control of ODEs.
Indeed, there have been efforts to use discretization to solve LOI type conditions for stability analysis and optimal control of PDEs (see, e.g.~\cite{christofides2012nonlinear}), optimal actuator placement for parabolic PDEs (see \cite{demetriou2003optimization} and \cite{morris2015using}).
However, in this paper, we do not employ discretization. While discretization has proven quite effective in practice, one should note that in general it is difficult to determine if feasibility of the discretized LOI implies stability of the non-discretized PDE. In contrast, this paper is focused on exploring how to use computation to solve LOIs (\ref{LOI}) directly by parameterizing the cone of positive and negative operators.

An alternative approach to control (but not stability analysis) of PDEs is backstepping - See \cite{krstic2008}, \cite{vazquez2016boundary}.
The backstepping approach searches for a mapping from the original PDE to a chosen stable PDE using a Volterra operator. The desired controller is then found by formulating a PDE for the kernel of the Volterra operator - the solution to which yields a stabilizing controller for the original system. An alternative approach, taken by \cite{peet2014lmi}, uses some of the machinery developed for DDEs to express Lyapunov inequalities as LMIs, which can then be tested using standard interior-point algorithms.
We also note that in \cite{fridman2016new} stability analysis and initial state recovery of semi-linear wave equation are presented in terms of LMIs.

Recently, our lab, in collaboration with other researchers have begun to explore how to use the SOS method for optimization of polynomials to study analyze and control PDEs without the need for discretization. Specifically, in~\cite{papachristodoulou2006}, we considered stability analysis of scalar nonlinear PDEs using a simple form of Lyapunov function. This simple Lyapunov function was recently extended in~\cite{valmorbida2014semi} and in~\cite{valmorbida2015stability} to consider some forms of coupled PDEs and in~\cite{Ahmadi2016} to consider passivity. In~\cite{gahlawat2015convex} and related publications, the class of Lyapunov functions was expanded to squares of semi-separable integral operators and applied to output-feedback dynamic control of scalar PDEs. Finally, in~\cite{meyer2015stability}, we considered stability of PDEs with multiple spatial variables.

The goal of this paper is, for the first time, to use SOS Lyapunov functionals defined by combined multiplier and integral operators to study stability of systems of coupled PDEs. Specifically, we parameterize Lyapunov functionals of the following form
\begin{align*}
V(w)=&\int_a^b w(x)^T M(x)w(x)\,dx \\
&+ \int_a^b w(x)^T\int_a^b N(x,y)w(y)\,dydx\numberthis{\label{functionals}}
\end{align*}
where $w\in L_2^n(a,b)$, and $M,N$ are polynomial matrices.
Using Lyapunov functionals of this form, the problem of stability of \textit{coupled} PDEs, in particular, is difficult in that the coefficients of the PDE are matrix-valued and hence do not commute with the polynomial matrices which define the Lyapunov functions. This issue makes it difficult to manipulate the derivative of the Lyapunov functional into a form for which we can test negativity on $L_2$. This is complicated by the presence of spatial derivatives in the time-derivative of the functional. To address this problem, we use a generalization of the concept of ``spacing operators'' which allows the algorithm to search over the space of integral equalities defined by the Fundamental Theorem of Calculus and our general form of boundary conditions - a concept initially proposed for scalar PDEs in~\cite{valmorbida2014semi} and~\cite{meyer2015stability}. Numerical results indicate that the proposed algorithm is effective at estimating the stability regions of several classes of coupled PDEs.


\section{NOTATION}
$\R$ and $\N$ denote the sets of real and natural numbers. $I_n$ is the identity matrix of dimemsion $n\times n$. $L_2^n(a,b)$ is the Hilbert space of Lebesgue square  integrable real vector valued functions on the interval $(a,b)\subset\R$, i.e.
\[
L_2^n(a,b):=\left\{f:(a,b)\to\R^n\, \Bigg|\, \sqrt{\int_a^b f(x)^Tf(x)\,dx}<\8\right\}
\]

For a function $u:[0,\8)\times(a,b)\to\R^n$, the classical notation $u_t(t,x)$ and $\ u_{x}(t,x)$ represent partial derivatives with respect to the first and second independent variables.
We also will use the classical notation for the derivative of a function with one variable, i.e. $w',w''$.

\section{CLASS OF SYSTEMS WE CONSIDER}

In this paper we propose an algorithm for stability analysis of the following class of PDEs where the function $u:[0,\8)\times[a,b]\to\R^n$ satisfies
\[
u_t(t,x)=A(x)u_{xx}(t,x)+B(x)u_x(t,x)+C(x)u(t,x)\numberthis{\label{pde:class}}
\]
for all $t>0$ and $x\in(a,b)$. The coefficients $A,B,C$ are polynomial matrices.

We use the matrix $D\in\R^{4n\times 4n}$ to represent boundary conditions, i.e.
for all $t>0$
\[
D\left[ \begin{array}{c}
u(t,a)\\ u(t,b) \\ u_x(t,a) \\ u_x(t,b)
\end{array}  \right]=0.\numberthis{\label{eq:BC}}
\]
Thus, in case of homogeneous Dirichlet boundary conditions
\[
D=\left[ \begin{array}{cccc}
I_n & 0 &0&0 \\
0& I_n & 0&0 \\
0& 0 & 0&0 \\
0& 0 & 0&0
\end{array} \right].\numberthis{\label{Dirichlet}}
\]
Mixed boundary conditions, for example homogeneous Neumann at $x=a$ and Dirichlet at $x=b$, can be written as (\ref{eq:BC}) with
\[
D=\left[ \begin{array}{cccc}
0 & 0 &0&0 \\
0& I_n & 0&0 \\
0& 0 & I_n&0 \\
0& 0 & 0&0
\end{array} \right].\numberthis{\label{Neumann}}
\]
We assume that solution to (\ref{pde:class}) exists, is unique and depends continuously on the initial condition $u(0,x)$.
Also, for each $t>0$ we suppose $u(t,\cdot),u_x(t,\cdot),u_{xx}(t,\cdot)\in L_2^n(a,b)$.

\subsection{Example 1: Schr\"{o}dinger equation}

To illustrate the class of PDEs which can be written as (\ref{class}), we first consider the Schrodinger equation.
In the following equation $V$ is the potential energy, $i$ is the imaginary unit, $\hbar$ is the reduced Planck constant and $\psi$ is the wave function of the quantum system.
\[
i\hbar \psi_t(t,x)=-\frac{\hbar^2}{m}\psi_{xx}(t,x)+V(x)\psi(t,x)
\]
can be written as two coupled PDEs if we decompose the solution into real and imaginary parts as $\psi(t,x)=\psi^{rl}(t,x)+i\psi^{im}(t,x)$ and then separate the real and imaginary parts of the equation to get two coupled PDEs, i.e.
\begin{align*}
\left[\begin{array}{c}
\psi^{rl}_t(t,x)\\
\psi^{im}_t(t,x)
\end{array}\right]&=\underbrace{\frac{\hbar}{m}\left[\begin{array}{cc}
0 & -1\\
1 & 0
\end{array}\right]}_A\left[\begin{array}{c}
\psi^{rl}_{xx}(t,x)\\
\psi^{im}_{xx}(t,x)
\end{array}\right] \\
&\hspace*{0.45in}+\underbrace{\frac{V(x)}{\hbar}\left[\begin{array}{cc}
0 & 1\\
-1 & 0
\end{array}\right]}_{C(x)}\left[\begin{array}{c}
\psi^{rl}(t,x)\\
\psi^{im}(t,x)
\end{array}\right].
\end{align*}
\subsection{Example 2: Model for an Acoustic Wave}
Next consider the following model for a 1-D acoustic wave. This hyperbolic PDE can be written in form (\ref{pde:class}) where
For all $t>0,r\in(0,R)$ and some fixed $c>0$, we define 
\[
p_{tt}(t,r)=c^2p_{rr}(t,r)+\frac{2c^2}{r}p_r(t,r).
\]
This scalar PDE is equivalent to two coupled first order PDEs as
\begin{align*}
\left[\begin{array}{c}
q_t(t,r)\\
p_t(t,r)
\end{array}\right] =& \underbrace{\left[\begin{array}{cc}
0 & c^2 \\
0 & 0
\end{array}\right]}_A \left[\begin{array}{c}
q_{rr}(t,r)\\
p_{rr}(t,r)
\end{array}\right] \\
&\hspace*{0.5in}+ \underbrace{\left[\begin{array}{cc}
0 & \frac{2c^2}{r} \\
0 & 0
\end{array}\right]}_{B(r)} \left[\begin{array}{c}
q_r(t,r)\\
p_r(t,r)
\end{array}\right]\\
&\hspace*{1in}+ \underbrace{\left[\begin{array}{cc}
0 & 0 \\
1 & 0
\end{array}\right]}_C\left[\begin{array}{c}
q(t,r)\\
p(t,r)
\end{array}\right],
\end{align*}
where $q$ is an auxiliary function. If the boundary conditions imply amplification of the waves, i.e.
\[
p(t,0)=f_1p(t,R)\text{ and } p_r(t,0)=f_2p_r(t,R)
\]
for some $f_1,f_2>0$, then in that case the boundary conditions are defined using
\[
D=\left[\begin{array}{cccccccc}
0&1&0&-f_1&0&0&0 &0\\
0&0&0 &0&0&1&0&-f_2\\
0&0&\hdots&\hdots&\hdots&\hdots&\hdots &0\\
\vdots&\ddots&\ddots&\ddots&\ddots&\ddots&\ddots&\vdots\\
0&0&\hdots&\hdots&\hdots&\hdots&\hdots &0
\end{array}\right]
\]


\section{LYAPUNOV STABILITY THEOREM}

\begin{mythm}\label{thLyap}
Let there exist continuous $V:L_2^n(a,b)\to\R$, $l,m\in\mathbb{N}$ and $b,a>0$ such that
\[
a\|w\|_{L_2^n}^l\leq V(w) \leq b \|w\|_{L_2^n}^m
\]
for all $w\in L_2^n(a,b)$. Furthermore, suppose that there exists $c\geq0$ such that for all $t\geq0$ the upper right-hand derivative
\[
D^+[V(u(t,\cdot))]\leq -c\|u(t,\cdot)\|^m_{L_2^n},
\]
where $u$ satisfies (\ref{pde:class}). Then
\[
\|u(t,\cdot)\|_{L_2^n}\leq \sqrt[l]{\frac{b}{a}}\|u(0,\cdot)\|^{m/l}_{L_2^n}\exp\left\{ -\dfrac{c}{lb}t \right\}.
\]
\end{mythm}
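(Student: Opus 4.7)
The plan is to reduce the statement to a scalar Gronwall-type inequality for the scalar function $t\mapsto V(u(t,\cdot))$ and then translate the resulting decay back to a decay in $\|u(t,\cdot)\|_{L_2^n}$ using the two-sided norm bounds on $V$.

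First I would combine the hypothesized derivative bound with the upper bound on $V$. Since $V(w)\le b\|w\|_{L_2^n}^m$, we have $\|u(t,\cdot)\|_{L_2^n}^m\ge V(u(t,\cdot))/b$, so the assumption $D^+[V(u(t,\cdot))]\le -c\|u(t,\cdot)\|_{L_2^n}^m$ immediately implies
\[
D^+[V(u(t,\cdot))]\le -\frac{c}{b}\,V(u(t,\cdot)).
\]
This is a scalar differential inequality in the Dini (upper right-hand) sense for the continuous function $t\mapsto V(u(t,\cdot))$ (continuity in $t$ follows from continuity of $V$ and continuous dependence of $u(t,\cdot)$ on $t$).

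Next I would invoke the standard comparison lemma for the upper right Dini derivative: if $\phi$ is continuous on $[0,\infty)$ and $D^+\phi(t)\le \alpha\,\phi(t)$, then $\phi(t)\le \phi(0)e^{\alpha t}$. With $\phi(t)=V(u(t,\cdot))$ and $\alpha=-c/b$, this yields
\[
V(u(t,\cdot))\le V(u(0,\cdot))\exp\!\Bigl(-\frac{c}{b}t\Bigr).
\]
Chaining this with the lower bound $a\|u(t,\cdot)\|_{L_2^n}^l\le V(u(t,\cdot))$ and the upper bound $V(u(0,\cdot))\le b\|u(0,\cdot)\|_{L_2^n}^m$ applied at $t=0$ gives
\[
a\|u(t,\cdot)\|_{L_2^n}^l \le b\,\|u(0,\cdot)\|_{L_2^n}^m\exp\!\Bigl(-\frac{c}{b}t\Bigr).
\]
Dividing by $a$ and taking the $l$-th root yields the claimed estimate, with the exponent becoming $-c/(lb)$ and the prefactor $\sqrt[l]{b/a}\,\|u(0,\cdot)\|_{L_2^n}^{m/l}$.

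The only delicate point is the comparison step: the hypothesis gives only a one-sided Dini derivative bound, not an ordinary derivative, so one cannot simply integrate. I would either cite the standard Dini-derivative comparison result or give a short self-contained argument: consider $\psi(t)=V(u(t,\cdot))\exp(ct/b)$ and verify that $D^+\psi(t)\le 0$, from which continuity of $\psi$ and a standard lemma (a continuous function with nonpositive upper right Dini derivative on an interval is nonincreasing) give $\psi(t)\le \psi(0)$. This is the main technical hurdle; the rest is algebraic manipulation of the resulting inequality.
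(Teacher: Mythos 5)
Your proof is correct and is the standard comparison-lemma argument: bound $D^+V\le -(c/b)V$ using the upper estimate on $V$, apply the Dini-derivative version of Gronwall to get exponential decay of $V$, then sandwich with the lower and upper norm bounds. The paper itself does not reproduce a proof (it only cites \cite{meyer2015stability}), but the argument given there is this same chain of inequalities, so your route is essentially the intended one; your explicit attention to the Dini-derivative comparison step (via $\psi(t)=V(u(t,\cdot))e^{ct/b}$ and the fact that a continuous function with nonpositive upper right Dini derivative is nonincreasing) is the right way to close the only delicate point.
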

\begin{proof}
For the proof see \cite{meyer2015stability}.
\end{proof}


\section{AN SOS PARAMETRIZATION OF POSITIVE FUNCTIONALS}

In this section, we present parameterization of a set of Lyapunov candidates for the class of PDEs that have form (\ref{pde:class}) using positive matrices.

\begin{mythm}\label{Th1}
Given any positive semi-definite matrix $P\in\s^{\frac{n}{2}(d+1)(d+4)}$ we can partition it as
\[
P=\left[\begin{array}{cc}
P_{11} & P_{12}  \\
P_{21} & P_{22}
\end{array}\right],\numberthis{\label{mat:P}}
\]
where $P_{11}\in\s^{n(d+1)}$.
Define
\[
Z_1(x):=Z_d(x)\otimes I_n\text{ and }Z_2(x,y):=Z_d(x,y)\otimes I_n\numberthis{\label{th1:Z1Z2}}
\]
where $x,y\in(a,b)$, $Z_d$ is a vector of monomials up to degree $d$ and $\otimes$ is the Kronecker product. If for some $\epsilon>0$
\begin{align*}
M(x):&=Z_1(x)^TP_{11}Z_1(x)+\epsilon I_n,\numberthis{\label{eq:M}}\\
N(x,y):&=Z_1(x)^T P_{12} Z_2(x,y)+Z_2(y,x)^TP_{21}Z_1(y)\\
&\hspace*{0.7in}+\int_a^b Z_2(z,x)^T P_{22}Z_2(z,y)\,dz,\numberthis{\label{eq:N}}
\end{align*}
then functional $V:L_2^n(a,b)\to\R$, defined as
\begin{align*}
V(w):&=\int_a^b w(x)^T M(x) w(x)\,dx\\
&\quad\quad+\int_a^b w(x)^T \int_a^b N(x,y) w(y)\,dydx,\numberthis{\label{eq:Lcand}}
\end{align*}
satisfies
\[
V(w)\geq\epsilon\|w\|_{L_2^n}\text{ for all }w\in L_2^n(a,b).\numberthis{\label{condV}}
\]
\end{mythm}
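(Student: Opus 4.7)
The plan is to rewrite $V(w)-\epsilon\|w\|_{L_2^n}^2$ as the integral of a perfect square, making the inequality manifest. The argument proceeds in four steps.

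\textbf{Step 1 (Factorization).} Since $P\succeq 0$, write $P=L^TL$ and partition the factor conformably as $L=[L_1\;L_2]$ so that
\[
P_{11}=L_1^TL_1,\quad P_{12}=L_1^TL_2,\quad P_{21}=L_2^TL_1,\quad P_{22}=L_2^TL_2.
\]
This collapses \eqref{eq:M} and \eqref{eq:N} into expressions in the two coordinate functions
\[
\alpha(x):=L_1Z_1(x)w(x),\quad \beta(x):=\int_a^b L_2Z_2(x,y)w(y)\,dy.
\]

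\textbf{Step 2 (Split the functional).} Using the factorization, the $M$-part of $V(w)-\epsilon\|w\|_{L_2^n}^2$ becomes $\int_a^b\alpha(x)^T\alpha(x)\,dx$. The $N$-part of $V(w)$ splits into three double integrals, one per summand of $N$ in \eqref{eq:N}.

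\textbf{Step 3 (Identify each piece).} The summand $Z_1(x)^TP_{12}Z_2(x,y)$ contributes $\int\alpha(x)^T\beta(x)\,dx$ directly from the definition of $\beta$. The summand $Z_2(y,x)^TP_{21}Z_1(y)$ contributes $\int\beta(x)^T\alpha(x)\,dx$ after relabeling $x\leftrightarrow y$ and applying Fubini to recognize the inner integral as $\beta(x)$. The integral summand $\int Z_2(z,x)^TP_{22}Z_2(z,y)\,dz$ contributes $\int\beta(z)^T\beta(z)\,dz$ after pulling the $z$-integration outside and factoring the remaining $x,y$ integrals, each of which produces $\beta(z)$.

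\textbf{Step 4 (Complete the square).} Summing the four contributions,
\[
V(w)-\epsilon\|w\|_{L_2^n}^2=\int_a^b\bigl(\alpha(x)+\beta(x)\bigr)^T\bigl(\alpha(x)+\beta(x)\bigr)\,dx\ge 0,
\]
which delivers the required lower bound in \eqref{condV}.

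The main technical care is needed in Step 3, where the bivariate monomial vector $Z_2(\cdot,\cdot)$ is not symmetric in its two arguments: one must carefully track which argument is being integrated in each piece and verify, via relabeling and Fubini, that the \emph{same} function $\beta$ appears in all three cross- and pure-$L_2$ terms. This uniformity is exactly what makes the completion of the square in Step 4 go through; the Kronecker structure $Z_2(x,y)=Z_d(x,y)\otimes I_n$ keeps the dimensions compatible with the vector-valued $w(y)$ throughout.
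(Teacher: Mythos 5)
Your proof is correct and follows essentially the same route as the paper: the paper rewrites $V(w)$ as $\int_a^b (\mathcal{Z}w)(x)^T P\, (\mathcal{Z}w)(x)\,dx + \epsilon\|w\|_{L_2^n}^2$ with $(\mathcal{Z}w)(x) = [\,Z_1(x)w(x);\ \int_a^b Z_2(x,y)w(y)\,dy\,]$ and invokes $P\succeq 0$, while your factorization $P=L^TL$ with $\alpha+\beta = L(\mathcal{Z}w)$ is just the same identity with the positivity made explicit as a perfect square. The Fubini/relabeling bookkeeping in your Step 3 matches the paper's manipulations of the third and fourth integrals term for term.
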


\begin{proof}
The idea of the proof is to show that $V$ from (\ref{eq:Lcand}), satisfies the following equation.
\begin{align*}
V(w)=&\int_a^b\hspace*{-0.05in} (\mathcal{Z}w)(x)^T P (\mathcal{Z}w)(x)\,dx + \epsilon\int_a^b \hspace*{-0.05in} w(x)^Tw(x)\,dx, \numberthis{\label{VZ}}
\end{align*}
where for all $x\in(a,b)$
\[
(\mathcal{Z}w)(x):=\left[\begin{array}{c}
Z_1(x)w(x)\\
\int_a^b Z_2(x,y)w(y)dy
\end{array}\right].\numberthis{\label{mathcZ}}
\]
Since $P\geq0$, then it is straightforward to show (\ref{condV}).

Consider the first integral of the right hand side in (\ref{VZ}), substitute for $\mathcal{Z}$ from (\ref{mathcZ}) and use the partition (\ref{mat:P}) as follows.
\begin{align*}
&\int_a^b (\mathcal{Z}w)(x)^T P (\mathcal{Z}w)(x)\,dx\numberthis{\label{prf:eq1}} \\
&\hspace*{0.0in}=\int_a^b w(x)^T Z_1(x)^T P_{11} Z_1(x) w(x)\,dx\\
&\hspace*{0.1in}+\int_a^b w(x)^T Z_1(x)^T P_{12} \int_a^b Z_2(x,y)w(y)\,dydx\\
&\hspace*{0.1in}+\int_a^b \int_a^b w(y)^TZ_2(x,y)^T\,dy\, P_{21} Z_1(x) w(x) dx\\
&\hspace*{0.1in}+\int_a^b\hspace*{-0.05in} \int_a^b w(y)^TZ_2(x,y)^T\,dy\, P_{22}\hspace*{-0.05in} \int_a^b\hspace*{-0.05in} Z_2(x,z)w(z)\,dzdx.
\end{align*}
Changing the order of integration in the 3rd integral of the right hand side of (\ref{prf:eq1}) and then switching between the integration variables $x$ and $y$ results in
\begin{align*}
&\int_a^b\int_a^b w(y)^TZ_2(x,y)^T\,dy\, P_{21} Z_1(x) w(x) dx\\
& = \int_a^b w(x)^T \int_a^b Z_2(y,x)^T P_{21} Z_1(y)w(y)\,dy dx.\numberthis{\label{prf:INT3}}
\end{align*}
Changing two times the order of integration in the 4th integral of the right hand side of (\ref{prf:eq1}) and then switching first between the integration variables $x$ and $z$, and then between $x$ and $y$ results in
\begin{align*}
&\hspace*{-0.05in}\int_a^b \hspace*{-0.05in}\int_a^b w(y)^TZ_2(x,y)^T\,dy\, P_{22}\hspace*{-0.05in} \int_a^b\hspace*{-0.05in} Z_2(x,z)w(z)\,dz\, dx\numberthis{\label{prf:INT4}}\\
&=\int_a^b\int_a^b\int_a^b  w(y)^T Z_2(x,y)^T P_{22} Z_2(x,z)w(z)dxdzdy\\
&=\int_a^b\int_a^b\int_a^b  w(y)^T Z_2(z,y)^T P_{22} Z_2(z,x)w(x)dzdxdy\\
&=\int_a^b\int_a^b\int_a^b  w(x)^T Z_2(z,x)^T P_{22} Z_2(z,y)w(y)dzdydx\\
&=\int_a^b w(x)^T\hspace*{-0.02in} \int_a^b\int_a^b  Z_2(z,x)^T  P _{22} Z_2(z,y)dzw(y)dydx.
\end{align*}
Using (\ref{prf:eq1})-(\ref{prf:INT4}) we can write
\begin{align*}
&\hspace*{-0.1in}\int_a^b (\mathcal{Z}w)(x)^T P (\mathcal{Z}w)(x)\,dx \\
&=\int_a^b w(x)^T Z_1(x)^T P_{11}Z_1(x)w(x)\,dx\\
&\hspace*{0.2in}+\int_a^b w(x)^T \int_a^b \Bigg( Z_1(x) P_{12}Z_2(x,y)\\
&\hspace{0.2in}+Z_2(y,x) P_{21}Z_1(y)\\
&\hspace*{0.2in}+\int_a^b Z_2(z,x)^T P_{22} Z_2(z,y)dz\Bigg)w(y)\,dy\,dx.\numberthis{\label{prf:gQ}}
\end{align*}
Using (\ref{eq:M}), (\ref{eq:N}) and (\ref{prf:gQ}) we can see that
\begin{align*}
&\hspace*{-0.05in}\int_a^b\hspace*{-0.05in} (\mathcal{Z}w)(x)^T\hspace*{-0.02in} P (\mathcal{Z}w)(x)\,dx\\
&\hspace*{0.3in}=\int_a^b w(x)^T M(x) w(x)\,dx - \epsilon\int_a^b w(x)^Tw(x)\,dx \\
&\hspace*{0.8in}+\int_a^b w(x)^T\int_a^b N(x,y) w(y)\,dydx.\numberthis{\label{prf:last}}
\end{align*}
If we add $\epsilon\int_a^b w(x)^Tw(x)\,dx$ to the both sides of (\ref{prf:last}) and use (\ref{eq:Lcand}), then we get (\ref{VZ}), which concludes the proof.
\end{proof}

\subsection{Functionals that are positive on $L_2^n(a,b)$ and not necessarily on $L_2^n(\R)$.}

Adding an extra term in (\ref{VZ}) as follows allow us to parameterize a larger set of Lyapunov candidates.
\begin{align*}
V(w)=&\int_a^b (\mathcal{Z}w)(x)^T P (\mathcal{Z}w)(x)\,dx + \epsilon\int_a^b w(x)^Tw(x)\,dx\\
&+\int_a^b g(x)(\mathcal{Z}w)(x)^T Q (\mathcal{Z}w)(x)\,dx, \numberthis{\label{VZ2}}
\end{align*}
where $g:[a,b]\to\R$ is continuous and positive and $Q\geq0$. Specifically, in this paper, we choose
\[
g(x)=(x-a)(b-x).\numberthis{\label{eq:g}}
\]

\begin{mythm}\label{Th2}
Given any positive semi-definite matrices $P,Q\in\s^{\frac{n}{2}(d+1)(d+4)}$ we can partition them as
\[
P=\left[\begin{array}{cc}
P_{11} & P_{12}  \\
P_{21} & P_{22}
\end{array}\right]\text{ and }Q=\left[\begin{array}{cc}
Q_{11} & Q_{12}  \\
Q_{21} & Q_{22}
\end{array}\right],\numberthis{\label{mat:PQ}}
\]
such that $P_{11},Q_{11} \in\s^{n(d+1)}$.
If for some $\epsilon>0$
\begin{align*}
\hspace*{-0.1in}M(x):&=Z_1(x)^T\left(P_{11}+g(x)Q_{11}\right)Z_1(x)+\epsilon I_n,\numberthis{\label{eq:M2}}\\
\hspace*{-0.1in}N(x,y):&=Z_1(x)^T\left(P_{12}+g(x)Q_{12}\right)Z_2(x,y)\numberthis{\label{eq:N2}}\\
&\quad+Z_2(y,x)^T\left(P_{21}+g(y)Q_{21}\right)Z_1(y)\\
&\quad+\hspace*{-0.025in}\int_a^b\hspace*{-0.025in} Z_2(z,x)^T\hspace*{-0.025in}\left(P_{22}+g(z)Q_{22}\right)Z_2(z,y)\,dz,
\end{align*}
where $Z_1,Z_2$ are defined as previously in (\ref{th1:Z1Z2}) and $g$ in (\ref{eq:g}), then functional $V:L_2^n(a,b)\to\R$, defined as in (\ref{eq:Lcand}) satisfies $V(w)\geq\epsilon\|w\|_{L_2^n}$ for all $w\in L_2^n(a,b)$.
\end{mythm}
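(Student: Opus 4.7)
The plan is to mirror the proof of Theorem 1 and exploit linearity in $P$ and $Q$ so that the bulk of the argument reduces to two copies of essentially the same computation. First I would define $V(w)$ directly through the operator-quadratic form (\ref{VZ2}). Because $P,Q\geq 0$ and $g(x)=(x-a)(b-x)\geq 0$ on $[a,b]$, each of the two quadratic integrands is pointwise non-negative, so the desired bound $V(w)\geq \epsilon\|w\|_{L_2^n}$ is immediate. What then remains is to verify that this operator-quadratic representation agrees with the kernel representation obtained by plugging the $M$ and $N$ of (\ref{eq:M2})--(\ref{eq:N2}) into (\ref{eq:Lcand}).

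The first integral in (\ref{VZ2}) is handled by invoking Theorem 1 directly: it contributes exactly the $P_{11}$, $P_{12}$, $P_{21}$ and $\int P_{22}$ pieces of $M(x)$ and $N(x,y)$, together with the $\epsilon I_n$. For the $Q$-integral, I would expand $(\mathcal{Z}w)(x)^T Q (\mathcal{Z}w)(x)$ using the block partition of $Q$ into four terms and then repeat the very same sequence of order-of-integration swaps used in the proof of Theorem 1, this time carrying $g$ along as a passive scalar weight. The key bookkeeping observation is that $g$ sits with the \emph{outer} integration variable, so under each re-labelling it becomes $g$ evaluated at whichever variable the outer one is renamed to: in the third term the outer $x$ is exchanged with $y$, producing $g(y) Q_{21}$; in the fourth term the outer $x$ first migrates to $z$ and then the new outer variable is swapped with $y$, producing $g(z) Q_{22}$ inside the inner integral over $z$. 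These are exactly the $g$-weighted terms appearing in (\ref{eq:M2})--(\ref{eq:N2}).

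I do not expect a genuine obstruction, because $g$ is a fixed scalar commuting with everything in sight. The only mildly subtle point is keeping track of the weight $g$ through the two consecutive changes in the order of integration in the fourth term, which is the most likely place for a clerical slip. Once the algebraic identity between the operator-quadratic and kernel representations of $V$ is established, the positivity statement follows at once from $P\geq 0$, $Q\geq 0$ and $g\geq 0$ on $[a,b]$, completing the proof.
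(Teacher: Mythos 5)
Your proposal is correct and follows exactly the route the paper intends: the paper's own proof of Theorem \ref{Th2} is simply the remark that one repeats the argument of Theorem \ref{Th1} with (\ref{VZ2}) in place of (\ref{VZ}), and your careful tracking of the weight $g$ through the reorderings (yielding $g(y)Q_{21}$ in the third term and $g(z)Q_{22}$ under the inner $z$-integral in the fourth) together with the observation that $g\geq 0$ on $[a,b]$ is precisely the bookkeeping that remark leaves implicit.
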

\begin{proof}
The proof for Theorem \ref{Th2} is pretty much the same as for Theorem \ref{Th1} with (\ref{VZ2}) instead of (\ref{VZ}) and thus omitted here.
\end{proof}

For simplicity we define a set of polynomials $(M,N)$ as follows.
\[
\hspace*{-0.1in}\Sigma_+^{n,d,\epsilon}:=\{(M,N):\exists P,Q\geq0\text{ and }(\ref{eq:M2}),(\ref{eq:N2})\text{ hold}\}.\numberthis{\label{Sigma+}}
\]

Similarly, we can define another set of polynomials for some $\epsilon<0$ that parameterize functionals of the form (\ref{eq:Lcand}) such that $V(w)\leq\epsilon\|w\|_{L_2^n}$ for all $w\in L_2^n(a,b)$.
\[
\hspace*{-0.1in}\Sigma_-^{n,d,\epsilon}:=\{(M,N): (-M,-N)\in\Sigma_+^{n,d,-\epsilon}\}.\numberthis{\label{Sigma-}}
\]


\section{DERIVATIVE OF THE LYAPUNOV FUNCTIONAL}

For convenience we define the following functions. For all $x,y\in[a,b]$ and $t>0$
\begin{align*}
K(x):&=\left[\begin{array}{ccc}
K_{11}(x)&M(x)B(x)&M(x)A(x)\\
B(x)^TM(x)&0&0\\
A(x)^TM(x)&0&0
\end{array}\right],\\
L(x,y):&=\left[\begin{array}{ccc}
\hspace*{-0.1in} L_{11}(x,y) & \hspace*{-0.2in}N(x,y)B(y) & \hspace*{-0.1in}N(x,y)A(y)\hspace*{-0.02in}\\
B(x)^TN(x,y)&0&0\\
A(x)^TN(x,y)&0&0
\end{array}\right],\\
K_{11}(x)&=C(x)^TM(x)+M(x)C(x),\\
L_{11}(x,y)&=C(x)^TN(x,y)+N(x,y)C(y),\\
U(t,x):&=[\ u(t,x)^T\ u_x(t,x)^T\ u_{xx}(t,x)^T\ ]^T.\numberthis{\label{KLU}}
\end{align*}
If we substitute $u(t,\cdot)$ for $w$ in (\ref{eq:Lcand}), differentiate the result with respect to $t$ and do some straightforward manipulations, we can obtain
\begin{align*}
\frac{d}{dt}\left[ V(u(t,x)) \right]=&\int_a^b U(t,x)K(x)U(t,x)\,dx\\
&+\int_a^b U(t,x)L(x,y)U(t,y)\,dydx.\numberthis{\label{derV}}
\end{align*}

It is possible to check if $(K,L)\in\Sigma_-^{3n,d,0}$, but would be conservative. The reason is that the elements in $U$ are not independent, i.e. second and third elements are partial derivatives of the first one. Therefore it is enough to ask (\ref{derV}) to be negative only on a subspace of $L_2^{3n}(a,b)$, i.e.
\[
\Lambda\hspace*{-0.02in}=\hspace*{-0.03in}\left\{\hspace*{-0.05in}\left[\begin{array}{c}
w_1\\
w_2\\
w_3
\end{array}\right]\hspace*{-0.03in}\in\hspace*{-0.02in} L_2^{3n}(a,b):  D\hspace*{-0.03in}\left[\begin{array}{c}
w_1(a)\\
w_1(b)\\
w_2(a)\\
w_2(b)
\end{array}\right]\hspace*{-0.05in}=0,\hspace*{-0.05in}\begin{array}{c}
w_2=w'_1,\\
w_3=w''_1
\end{array}
\hspace*{-0.05in}
\right\}\numberthis{\label{Lambda}}
\]
Notice, that $\Lambda$ depends on $D$ that represents the boundary conditions as before in (\ref{eq:BC}).

\section{SPACING OPERATORS}

To parameterize functions which are negative on $\Lambda$, but not necessarily on the whole space $L_2^{3n}(a,b)$ we use the following theorem.


\begin{mythm}
Let $X$ be a closed subspace of some Hilbert space $Y$. Then $\left<u,\mathcal{R}u\right>_Y\leq 0$ for all $u\in X$ if and only if there exist $\mathcal{M}$ and $\mathcal{T}$ such that $\mathcal{R}=\mathcal{M}+\mathcal{T}$ and $\left<w,\mathcal{M}w\right>_Y\leq 0$ for all $w\in Y$ and $\left<u,\mathcal{T}u\right>_Y=0$ for all $u\in X$.
\end{mythm}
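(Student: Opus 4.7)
The plan is to prove each direction separately, with all the real work being a clean application of the orthogonal projection theorem. The sufficiency direction ($\Leftarrow$) is essentially a one-line computation: if $\mathcal{R}=\mathcal{M}+\mathcal{T}$ with $\langle w,\mathcal{M}w\rangle_Y\le 0$ for all $w\in Y$ and $\langle u,\mathcal{T}u\rangle_Y=0$ for all $u\in X$, then for any $u\in X$ we immediately get $\langle u,\mathcal{R}u\rangle_Y=\langle u,\mathcal{M}u\rangle_Y+\langle u,\mathcal{T}u\rangle_Y\le 0$, as required.

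For the necessity direction ($\Rightarrow$), I would use the fact that since $X$ is a closed subspace of the Hilbert space $Y$, there is a bounded self-adjoint orthogonal projection $\Pi:Y\to Y$ with range $X$, satisfying $\Pi^{2}=\Pi=\Pi^{*}$ and $\Pi u=u$ for every $u\in X$. Using $\Pi$, I would propose the explicit decomposition
\[
\mathcal{M}:=\Pi\mathcal{R}\Pi,\qquad \mathcal{T}:=\mathcal{R}-\Pi\mathcal{R}\Pi,
\]
so that $\mathcal{R}=\mathcal{M}+\mathcal{T}$ holds by construction, and then verify the two required properties separately.

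For $\mathcal{M}$: any $w\in Y$ satisfies $\Pi w\in X$, so using $\Pi^{*}=\Pi$ yields $\langle w,\mathcal{M}w\rangle_Y=\langle \Pi w,\mathcal{R}\,\Pi w\rangle_Y\le 0$ directly from the hypothesis on $\mathcal{R}$ restricted to $X$. For $\mathcal{T}$: any $u\in X$ satisfies $\Pi u=u$, so $\mathcal{T}u=\mathcal{R}u-\Pi\mathcal{R}u=(I-\Pi)\mathcal{R}u$ lies in $X^{\perp}$, and therefore $\langle u,\mathcal{T}u\rangle_Y=0$ because $u\in X$ is orthogonal to $X^{\perp}$.

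I do not foresee a serious obstacle here; the argument is essentially the projection theorem plus two short calculations. The only subtlety worth flagging is that we genuinely need the \emph{orthogonal} projection rather than an arbitrary idempotent with range $X$: self-adjointness of $\Pi$ is exactly what lets us move the middle $\Pi$ in $\Pi\mathcal{R}\Pi$ onto the test vector in the first calculation, and it is also what guarantees that $(I-\Pi)\mathcal{R}u$ actually lands in $X^{\perp}$ in the second.
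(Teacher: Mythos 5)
Your proof is correct and follows essentially the same route as the paper: invoke the orthogonal projection $\Pi$ onto the closed subspace $X$, set $\mathcal{M}=\Pi\mathcal{R}\Pi$, and use $\Pi=\Pi^{*}=\Pi^{2}$ together with $\Pi u=u$ for $u\in X$ to verify both properties. (Your sign convention $\mathcal{T}=\mathcal{R}-\Pi\mathcal{R}\Pi$ in fact corrects a small typo in the paper, which writes $\mathcal{T}=\mathcal{M}-\mathcal{R}$ even though $\mathcal{R}=\mathcal{M}+\mathcal{T}$ forces $\mathcal{T}=\mathcal{R}-\mathcal{M}$; the computation showing $\left<u,\mathcal{T}u\right>_Y=0$ is unaffected either way.)
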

\begin{proof}
($\Leftarrow$) is straightforward. For ($\Rightarrow$), suppose $\left<u,\mathcal{R}u\right>_Y\leq0$ for all $u\in X$. Since $X$ is a closed subspace of a Hilbert space $Y$, there exists a projection operator such that $\mathcal{P}=\mathcal{P}^*=\mathcal{P}\mathcal{P}$ and $\mathcal{P}w\in X$ for all $w\in Y$. Let $\mathcal{M}=\mathcal{PRP}$ and $\mathcal{T=M-R}$. Then for all $w\in Y$,
\[
\left<w,\mathcal{M}w\right>_Y=\left<w,\mathcal{PRP}w\right>_Y=\left<\mathcal{P}w,\mathcal{RP}w\right>_Y\leq0
\]
since $\mathcal{P}w\in X$. Furthermore, for all $u\in X$
\begin{align*}
\left<u,\mathcal{T}u\right>_Y &= \left<u,\mathcal{PRP}u \right>_Y - \left<u,\mathcal{R}u \right>_Y \\
&= \left< \mathcal{P}u,\mathcal{RP}u \right>_Y - \left< u,\mathcal{R}u \right>_Y \\
&= \left< u,\mathcal{R}u \right>_Y - \left< u,\mathcal{R}u \right>_Y = 0.
\end{align*}
\end{proof}
We use $\Sigma_-^{3n,d,0}$ to parameterize a subset of $\mathcal{M}$ that is negative on the whole space $L_2^{3n}(a,b)$. Now we show how to parameterize a subset of operators $\mathcal{T}$ - the so-called ``spacing operators'' using polynomial spacing functions. Therefore the sum of $\mathcal{M}$ and $\mathcal{T}$ yield an operator $\mathcal{R}$ which is negative on $\Lambda$, but not necessarily on the whole $L_2^{3n}(a,b)$ space.

\subsection{Parametrization of Spacing Operators by Polynomials}

The following lemmas define the structure of polynomial matrices $T$ and $R$ such that for all $W\in\Lambda$
\begin{align*}
&\int_a^b W(x)^T T(x)W(x)dx \\
&+\int_a^b W(x)^T\int_a^b R(x,y)W(y)dydx=0.
\end{align*}
Beforehand define the following vector for convenience.
\[
\Upsilon:=[\ w(a)^T\ w(b)^T\ w'(a)^T\ w'(b)^T\ ]^T\numberthis{\label{upsilon}}
\]


\begin{mylem}\label{lem:sp1}
Let $P_i:[a,b]\to\R^{n\times n}, i=1..4$ be polynomials and $w,w',w''\in L_2^n(a,b)$. If
\[
T(x)\hspace*{-0.02in}=\hspace*{-0.02in}\left[\hspace*{-0.08in}\begin{array}{ccc}
P_1'(x)			& \hspace*{-0.02in} P_1(x)+P_2'(x) & \hspace*{-0.02in} P_2(x)\\
 P_1(x)+P_3'(x)	& \hspace*{-0.02in} P_2(x)+P_3(x)+P_4'(x) & \hspace*{-0.02in} P_4(x)\\
P_3(x)			& \hspace*{-0.02in} P_4(x) & \hspace*{-0.02in} 0
\end{array}
\hspace*{-0.05in}\right] \hspace*{-0.1in} \numberthis{\label{spT}}
\]
then
\[
\int_a^b W(x) T(x) W(x)\,dx = \Upsilon^T \Pi_1 \Upsilon,
\]
where $\Upsilon$ is defined in (\ref{upsilon}) and
\[
\Pi_1=\left[\begin{array}{cccc}
-P_1(a) & 0 & -P_2(a) & 0\\
0 & P_1(b) & 0 & P_2(a)\\
-P_3(a) & 0 & -P_4(a) & 0\\
0 & P_3(b) & 0 & P_4(b)
\end{array}\right].
\]
\end{mylem}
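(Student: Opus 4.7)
The plan is to show that the integrand $W(x)^T T(x) W(x)$ is an exact derivative $F'(x)$ of a scalar-valued boundary functional $F$, so that the fundamental theorem of calculus converts $\int_a^b W^T T W\,dx$ into $F(b)-F(a)$; it then remains to rearrange this difference into the stated quadratic form $\Upsilon^T\Pi_1\Upsilon$.

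The candidate $F$ can be read off from the positions of $P_1,\dots,P_4$ in the matrix $T(x)$. Since $P_1$ is paired with block indices corresponding to $(w,w)$, $P_2$ with $(w,w')$, $P_3$ with $(w',w)$, and $P_4$ with $(w',w')$, the natural ansatz is
\begin{align*}
F(x):=\,&w(x)^T P_1(x) w(x) + w(x)^T P_2(x) w'(x)\\
&+w'(x)^T P_3(x) w(x) + w'(x)^T P_4(x) w'(x).
\end{align*}
Differentiating each of the four summands via the product rule produces twelve terms. I would then group them according to which of the nine blocks $(w^{(i)})^T(\cdot)(w^{(j)})$, $i,j\in\{0,1,2\}$, they populate. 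For example, the $(1,2)$ block collects $w^T P_1 w'$ from $(w^T P_1 w)'$ and $w^T P_2' w'$ from $(w^T P_2 w')'$, which together yield the entry $P_1(x)+P_2'(x)$ of $T$. A term-by-term check of the remaining eight blocks verifies the identity $W(x)^T T(x) W(x)=F'(x)$; the $(3,3)$ block is zero precisely because $F$ contains no term quadratic in $w''$, and the interior $w''$-couplings appear only through the cross blocks $(1,3),(2,3),(3,1),(3,2)$ carrying $P_2,P_4,P_3,P_4$.

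After integrating from $a$ to $b$, the lemma reduces to expanding $F(b)-F(a)$ and identifying it as $\Upsilon^T\Pi_1\Upsilon$. Each of the eight resulting terms $\pm w^{(i)}(c)^T P_k(c) w^{(j)}(c)$, $c\in\{a,b\}$, lands in a specific block of $\Pi_1$: the diagonal blocks receive $-P_k(a)$ or $+P_k(b)$ from the sign convention of the fundamental theorem, while the off-diagonal blocks encode the mixed $(w,w')$ pairings carried by $P_2$ and $P_3$. The proof is therefore a matter of routine product-rule bookkeeping; the only real obstacle is keeping the evaluation points, block indices, and signs consistent while matching the expansion of $F(b)-F(a)$ against the entries of $\Pi_1$.
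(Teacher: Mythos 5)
Your proof is correct and takes essentially the same route as the paper: the paper's (omitted for brevity) argument applies the fundamental theorem of calculus to $\frac{d}{dx}$ of the quadratic form in $(w,w')$ with kernel $\left[\begin{smallmatrix} P_1 & P_2 \\ P_3 & P_4 \end{smallmatrix}\right]$, which is exactly your ansatz $F(x)$. As a byproduct, your expansion of $F(b)-F(a)$ correctly indicates that the $(2,4)$ block of $\Pi_1$ should read $P_2(b)$ rather than the $P_2(a)$ printed in the statement.
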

\vspace*{0.2in}
\begin{proof}
The proof is based on applying the fundamental theorem of calculus to
\[
\int_a^b\frac{d}{dx} \left(\left[\begin{array}{c}
w(x)^T\\
w'(x)^T
\end{array}\right]^T \left[\begin{array}{cc}
P_1(x) & P_2(x)\\
P_3(x) & P_4(x)
\end{array}\right]\left[\begin{array}{c}
w(x)\\
w'(x)
\end{array}\right]\right)dx.
\]
and omitted for brevity.
\end{proof}
Notice that $D\Upsilon=0$ and, therefore,
\begin{align*}
\Upsilon^T \Pi_1 \Upsilon
&= \Upsilon^T (I_{4n}-D+D)^T \Pi_1 (I_{4n}-D+D) \Upsilon\\
&= \Upsilon^T (I_{4n}-D+D)^T \Pi_1 (I_{4n}-D) \Upsilon\\
&= \Upsilon^T (I_{4n}-D)^T \Pi_1 (I_{4n}-D) \Upsilon.
\end{align*}
Using Lemma (\ref{lem:sp1}) we can define the following set.
\[
\Xi_1:=\{ T\text{ as defined in (\ref{spT})}: (I_{4n}-D)^T \Pi_1 (I_{4n}-D)=0\}
\]
Thus, for any $T\in\Xi_1$ and any $W\in\Lambda$ we have
\[
\int_a^b W(x)^T T(x)W(x)dx=0.
\]


\begin{mylem}\label{lem:sp2}
Let $Q_i:[a,b]\times[a,b]\to\R^{n\times n}, i=1..4$ be polynomials and $w,w',w''\in L_2^n(a,b)$.  If
\begin{align*}
R_1(x,y)&=\left[\begin{array}{ccc}
Q_{1,xy}(x,y) & R_{1,12}(x,y) & Q_{3,x}(x,y) \\
R_{1,21}(x,y) &	R_{1,22}(x,y) & R_{1,23}(x,y)\\
Q_{2,y}(x,y)  &	R_{1,32}(x,y) &	Q_4(x,y)
\end{array}\right],\\
R_{1,12}(x,y)&= Q_{3,xy}(x,y) + Q_{1,x}(x,y),\\
R_{1,21}(x,y)&= Q_{2,xy}(x,y) + Q_{1,y}(x,y),\\
R_{1,22}(x,y)&= Q_{4,xy}(x,y) + Q_{2,x}(x,y) + Q_{3,y}(x,y),\\
R_{1,23}(x,y)&= Q_{4,x}(x,y) + Q_3(x,y),\\
R_{1,32}(x,y)&= Q_{4,y}(x,y) + Q_2(x,y)\numberthis{\label{spR1}},
\end{align*}
then
\begin{align*}
&\int_a^b\int_a^b  W(x)^T R_1(x,y) W(y) dxdy = \Upsilon^T \Theta_1 \Upsilon,
\end{align*}
where $\Upsilon$ is defined in (\ref{upsilon}) and
\[
\Theta_1\hspace*{-0.03in}=\hspace*{-0.03in}\left[\begin{array}{cccc}
Q_1(a,a) & -Q_1(a,b) & Q_3(a,a) & -Q_3(a,b)\\
-Q_1(b,a) & Q_1(b,b) & -Q_3(b,a) & Q_3(b,b)\\
Q_2(a,a) & -Q_2(a,b) & Q_4(a,a) & -Q_4(a,b)\\
-Q_2(b,a) & Q_2(b,b) & -Q_4(b,a) & Q_4(b,b)
\end{array}\right]
\]
\end{mylem}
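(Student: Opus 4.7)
The plan is to mirror the proof of Lemma~\ref{lem:sp1} with the fundamental theorem of calculus applied in both spatial variables simultaneously. Introduce the auxiliary scalar function
\[
f(x,y):=\begin{bmatrix} w(x)\\ w'(x)\end{bmatrix}^{\!T}\begin{bmatrix} Q_1(x,y) & Q_3(x,y)\\ Q_2(x,y) & Q_4(x,y)\end{bmatrix}\begin{bmatrix} w(y)\\ w'(y)\end{bmatrix},
\]
which expands into four bilinear terms of the form $w^{(i)}(x)^T Q_k(x,y)\,w^{(j)}(y)$ with $i,j\in\{0,1\}$.

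Applying the fundamental theorem of calculus first in $y$ and then in $x$ yields
\[
\int_a^b\int_a^b \frac{\partial^2 f}{\partial x\,\partial y}(x,y)\,dy\,dx = f(b,b)-f(a,b)-f(b,a)+f(a,a).
\]
Expanding the four boundary evaluations and collecting coefficients of pairs drawn from $\Upsilon=[w(a)^T,w(b)^T,w'(a)^T,w'(b)^T]^T$ as in (\ref{upsilon}) reproduces $\Upsilon^T\Theta_1\Upsilon$ exactly: the sign pattern $(+,-,-,+)$ on the corners matches the signs in the off-diagonal blocks of $\Theta_1$, and the four kernels $Q_1,\ldots,Q_4$ populate the four $2\times 2$ sub-blocks according to which components of $\Upsilon$ multiply them. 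This reduction is a straightforward tabulation.

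The remaining and heavier step is the pointwise identity
\[
\frac{\partial^2 f}{\partial x\,\partial y}(x,y) = W(x)^T R_1(x,y)\,W(y),
\]
which I would prove by the product rule applied to each of the four summands of $f$. For every summand, each of the two differentiations can either fall on the $w$ factor (promoting $w^{(i)}$ to $w^{(i+1)}$) or on the kernel $Q_k$, producing four contributions per summand and sixteen in total. Grouping these by the $(i,j)$ block pattern $w^{(i)}(x)^T\cdot w^{(j)}(y)$ recovers the entries $R_{1,ij}$ listed in~(\ref{spR1}); for instance, the $(3,1)$ block receives only the contribution in which $\partial_x$ promotes $w'(x)$ to $w''(x)$ and $\partial_y$ lands on $Q_2$, giving $Q_{2,y}$. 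The main obstacle is simply keeping the bookkeeping straight across all sixteen terms while correctly tracking whether each $Q_k$ appears bare or with one of its partial derivatives $Q_{k,x}$, $Q_{k,y}$, or $Q_{k,xy}$; no new analytic machinery beyond that used in Lemma~\ref{lem:sp1} is needed.
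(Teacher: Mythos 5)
Your plan is exactly the paper's proof: the authors' entire argument is a double application of the fundamental theorem of calculus to $\int_a^b\int_a^b \partial_x\partial_y f(x,y)\,dx\,dy$ with precisely the $f$ you define, and your reduction of the boundary evaluations $f(b,b)-f(a,b)-f(b,a)+f(a,a)$ to $\Upsilon^T\Theta_1\Upsilon$ is correct (the sign pattern and block placement check out).

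The one step you defer, however, does not close exactly as you assert. If you actually carry out the sixteen-term product-rule tabulation, the $(2,2)$ block of $\partial_x\partial_y f$ comes out as $Q_1(x,y)+Q_{2,x}(x,y)+Q_{3,y}(x,y)+Q_{4,xy}(x,y)$: the summand $w(x)^TQ_1(x,y)w(y)$ contributes the bare term $w'(x)^TQ_1(x,y)w'(y)$ when $\partial_x$ promotes $w(x)$ and $\partial_y$ promotes $w(y)$ simultaneously. This term has no analogue in Lemma~\ref{lem:sp1}, where the two cross terms $w'^TP_1w$ and $w^TP_1w'$ land in the off-diagonal $(1,2)$ and $(2,1)$ entries; here the two derivatives act on different arguments, so both promotions can happen at once and the contribution lands on the diagonal. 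The printed $R_{1,22}$ in (\ref{spR1}) omits this $Q_1(x,y)$, so with $R_1$ as stated the identity would read $\int_a^b\int_a^b W(x)^TR_1(x,y)W(y)\,dx\,dy=\Upsilon^T\Theta_1\Upsilon-\int_a^b\int_a^b w'(x)^TQ_1(x,y)w'(y)\,dy\,dx$, which is not what the lemma claims. Your method is the right one, but you should either correct $R_{1,22}$ to include $Q_1(x,y)$ (in which case your argument goes through verbatim) or flag the discrepancy as a typo in the statement rather than asserting that the bookkeeping reproduces (\ref{spR1}) exactly.
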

\vspace*{0.1in}
\begin{proof}
The proof is straightforward double application of the fundamental theorem of calculus to
\[
\int_a^b\hspace*{-0.07in}\int_a^b\hspace*{-0.07in}\frac{\partial^2}{\partial x\partial y} \hspace*{-0.03in}\left(\hspace*{-0.03in}\left[\hspace*{-0.05in}\begin{array}{c}
w(x)^T\\
w'(x)^T
\end{array}\hspace*{-0.07in}\right]^T \hspace*{-0.05in}\left[\hspace*{-0.05in}\begin{array}{cc}
Q_1(x,y) & \hspace*{-0.1in}Q_3(x,y)\\
Q_2(x,y) & \hspace*{-0.1in}Q_4(x,y)
\end{array}\hspace*{-0.07in}\right]\hspace*{-0.05in}\left[\hspace*{-0.07in}\begin{array}{c}
w(y)\\
w'(y)
\end{array}\hspace*{-0.07in}\right]\hspace*{-0.03in}\right)\hspace*{-0.03in}dxdy.
\]
\end{proof}
Similarly as for $\Xi_1$, using Lemma (\ref{lem:sp2}) we can define
\[
\Xi_2:=\{ R_1\text{ as defined in (\ref{spR1})}: (I_{4n}-D)^T \Theta_1 (I_{4n}-D)\hspace*{-0.02in}=\hspace*{-0.02in}0\}
\]
Thus, for any $R_1\in\Xi_2$ and any $W\in\Lambda$ we have
\[
\int_a^b \int_a^b W(x)^T R_1(x,y)W(y)dxdy=0.
\]


\begin{mylem}\label{lem:sp3}
Let $Q_5,Q_6:[a,b]\times[a,b]\to\R^{n\times n}$ be polynomials and $w,w',w''\in L_2^n(a,b)$.  If
\begin{align*}
R_2(x,y)&\hspace*{-0.03in}=\hspace*{-0.03in}\left[\hspace*{-0.05in}\begin{array}{ccc}
0 &\hspace*{-0.05in} 0 &\hspace*{-0.05in} 0 \\
0 & \hspace*{-0.05in}0 & \hspace*{-0.05in}0 \\
Q_{5,y}(x,y)	&\hspace*{-0.05in}	Q_{6,y}(x,y)+Q_5(x,y)	&	\hspace*{-0.05in}Q_6(x,y)
\end{array}\hspace*{-0.05in}\right]\numberthis{\label{spR2}}
\end{align*}
then
\begin{align*}
&\int_a^b\hspace*{-0.02in}\int_a^b\hspace*{-0.02in}  W(x)^T R_2(x,y) W(y) dxdy \hspace*{-0.02in}=\hspace*{-0.02in} \int_a^b \hspace*{-0.02in} w''(x)^T \Theta_2(x) \Upsilon dx,
\end{align*}
where $\Upsilon$ is defined in (\ref{upsilon}) and
\[
\Theta_2(x)\hspace*{-0.03in}=\left[\begin{array}{c}
-Q_5(x,a)\ \ Q_5(x,b) \ -Q_6(x,a)\ \ Q_6(x,b)
\end{array}\right].
\]
\end{mylem}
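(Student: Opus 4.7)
The plan is to exploit the fact that $R_2(x,y)$ has only its bottom block row nonzero, so the left-hand side collapses to an integral against $w''(x)^T$ alone, and then to recognize the inner $y$-integrand as a total $y$-derivative which can be evaluated by the fundamental theorem of calculus.

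First I would substitute the definition of $R_2$ into the double integral. Because the first two block rows of $R_2$ vanish, only the $w''(x)^T$ component of $W(x)$ contributes, so
\begin{align*}
&\int_a^b\int_a^b W(x)^T R_2(x,y) W(y)\,dy\,dx \\
&\quad= \int_a^b w''(x)^T\!\!\int_a^b\!\bigl[Q_{5,y}(x,y)w(y) + (Q_{6,y}(x,y)+Q_5(x,y))w'(y) \\
&\quad\quad\quad + Q_6(x,y)w''(y)\bigr]\,dy\,dx.
\end{align*}

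Next I would observe that the inner integrand in $y$ is exactly a product-rule expansion. Specifically,
\begin{align*}
\frac{d}{dy}\bigl[Q_5(x,y)w(y)\bigr] &= Q_{5,y}(x,y)w(y)+Q_5(x,y)w'(y),\\
\frac{d}{dy}\bigl[Q_6(x,y)w'(y)\bigr] &= Q_{6,y}(x,y)w'(y)+Q_6(x,y)w''(y),
\end{align*}
so the bracketed expression equals $\tfrac{d}{dy}\bigl[Q_5(x,y)w(y)+Q_6(x,y)w'(y)\bigr]$. This is the one observation that must be made; it is the analogue of the product-rule bookkeeping used in Lemmas \ref{lem:sp1} and \ref{lem:sp2}, and the structural choice of the entries of $R_2$ in (\ref{spR2}) is precisely what makes this identification work.

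Finally I would apply the fundamental theorem of calculus in $y$, giving
\[
\bigl[Q_5(x,y)w(y)+Q_6(x,y)w'(y)\bigr]_{y=a}^{y=b} = \Theta_2(x)\Upsilon
\]
by direct inspection of the definitions of $\Theta_2$ and $\Upsilon$, and substitute back into the outer integral to conclude. There is no real obstacle here; the only care needed is bookkeeping of signs at $y=a$ versus $y=b$ so that the row vector $\Theta_2(x)$ comes out in the order $(-Q_5(x,a),\,Q_5(x,b),\,-Q_6(x,a),\,Q_6(x,b))$ matching the fixed ordering of $\Upsilon$ from (\ref{upsilon}).
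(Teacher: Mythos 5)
Your proof is correct and is essentially the paper's own argument: the paper likewise identifies the integrand with $\frac{\partial}{\partial y}\bigl(w''(x)^T[Q_5(x,y)\ Q_6(x,y)][w(y)^T\ w'(y)^T]^T\bigr)$ and applies the fundamental theorem of calculus in $y$. The product-rule expansion, the boundary evaluation, and the sign bookkeeping against the ordering of $\Upsilon$ all match.
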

\begin{proof}
The fundamental theorem of calculus should be applied to
\[
\int_a^b\hspace*{-0.05in}\int_a^b\hspace*{-0.05in}\frac{\partial}{\partial y} \hspace*{-0.02in}\left(\hspace*{-0.02in}w''(x)^T \hspace*{-0.02in}\left[\hspace*{-0.02in}\begin{array}{cc}
Q_5(x,y) &\hspace*{-0.02in} Q_6(x,y)
\end{array}\hspace*{-0.02in}\right]\hspace*{-0.02in}\left[\begin{array}{c}
\hspace*{-0.01in}w(y)\\
\hspace*{-0.01in}w'(y)
\end{array}\hspace*{-0.01in}\right]\right)\hspace*{-0.02in}dxdy.
\]
\end{proof}
Using Lemma (\ref{lem:sp3}) we can define the following set.
\[
\Xi_3:=\left\{ R_2\text{ as defined in (\ref{spR2})}: \begin{array}{c}
\Theta_2(x)^T (I_{4n}-D)\hspace*{-0.02in}=\hspace*{-0.02in}0\\
\text{ for all }x\in(a,b)
\end{array}\right\}
\]
Thus, for any $R_1\in\Xi_1$ and any $W\in\Lambda$ we have
\[
\int_a^b \int_a^b W(x)^T R_2(x,y)W(y)dxdy=0.
\]


\begin{mylem}\label{lem:sp4}
Let $Q_7,Q_8:[a,b]\times[a,b]\to\R^{n\times n}$ be polynomials and $w,w',w''\in L_2^n(a,b)$.  If
\begin{align*}
R_3(x,y)&\hspace*{-0.03in}=\hspace*{-0.03in}\left[\begin{array}{ccc}
0 & 0 & Q_{7,x}(x,y) \\
0 & 0 & Q_{8,x}(x,y)+Q_7(x,y) \\
0 &	0 &	Q_8(x,y)
\end{array}\right]\numberthis{\label{spR3}}
\end{align*}
then
\begin{align*}
&\int_a^b\int_a^b  W(x)^T R_3(x,y) W(x) dx = \int_a^b \Upsilon^T \Theta_3(y) w''(y) dy,
\end{align*}
where $\Upsilon$ is defined in (\ref{upsilon}) and
\[
\Theta_3(y)\hspace*{-0.03in}=\left[\begin{array}{cccc}
-Q_7(a,y) \ \ Q_7(b,y) \ -Q_8(a,y) \ \ Q_8(b,y)
\end{array}\right].
\]
\end{mylem}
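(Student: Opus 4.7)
The plan is to mirror the proof of Lemma \ref{lem:sp3} with the roles of $x$ and $y$ swapped. In Lemma \ref{lem:sp3} the matrix $R_2$ had its only nonzero entries in the third row, which called for applying the fundamental theorem of calculus in $y$; here $R_3$ has its only nonzero entries in the third column, so the natural move is to apply the fundamental theorem in $x$ instead. The hint following Lemma \ref{lem:sp3} already indicates precisely the analogous antiderivative to start from.

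Concretely, I would begin with the identity
\[
\int_a^b\!\int_a^b \frac{\partial}{\partial x}\!\left(\begin{bmatrix} w(x) \\ w'(x) \end{bmatrix}^{\!T}\! \begin{bmatrix} Q_7(x,y) \\ Q_8(x,y) \end{bmatrix} w''(y)\right) dx\,dy
\]
and evaluate it in two ways. First, the fundamental theorem of calculus in $x$ produces boundary contributions at $x=a$ and $x=b$; collecting the four resulting blocks $-Q_7(a,y)$, $Q_7(b,y)$, $-Q_8(a,y)$, $Q_8(b,y)$ in the order dictated by $\Upsilon^T=[w(a)^T\ w(b)^T\ w'(a)^T\ w'(b)^T]$ assembles exactly $\int_a^b \Upsilon^T \Theta_3(y) w''(y)\,dy$. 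Second, expanding the $x$-derivative by the product rule yields three summands, $w(x)^T Q_{7,x}(x,y) w''(y)$, $w'(x)^T[Q_7(x,y)+Q_{8,x}(x,y)]\, w''(y)$, and $w''(x)^T Q_8(x,y) w''(y)$, which are precisely the three nonzero entries in the third column of $R_3$ from (\ref{spR3}) paired against $w''(y)$; thus the integrand equals $W(x)^T R_3(x,y)\, W(y)$. Equating the two evaluations yields the claim.

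There is no substantive obstacle here: the argument is a one-line FTC and a one-line product-rule expansion, directly dual to the proof of Lemma \ref{lem:sp3}. The only thing to watch is bookkeeping of dimensions and signs, namely, assembling the four boundary blocks into $\Theta_3(y)$ (a $4n\times n$ matrix, despite the row-style display) in the correct order so as to pair with $\Upsilon^T$, and verifying that the product-rule output lands in the correct row of $R_3$ (the $Q_8$ term with $w''(x)^T$, the $Q_7$ and $Q_{8,x}$ terms with $w'(x)^T$, and the $Q_{7,x}$ term with $w(x)^T$).
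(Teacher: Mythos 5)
Your proof is correct and follows exactly the route the paper intends: the paper's own proof consists of the single instruction to apply the fundamental theorem of calculus to $\int_a^b\int_a^b \frac{\partial}{\partial x}\bigl( [w(x)^T\ w'(x)^T] [Q_7(x,y)^T\ Q_8(x,y)^T]^T w''(y)\bigr)\,dx\,dy$, which is precisely the identity you start from and then evaluate both by FTC and by the product rule. You also correctly read $W(x)^T R_3(x,y)W(y)$ where the statement has the typo $W(x)^T R_3(x,y)W(x)\,dx$, so no further comment is needed.
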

\begin{proof}
Apply the fundamental theorem of calculus to
\[
\int_a^b\hspace*{-0.05in}\int_a^b\hspace*{-0.05in}\frac{\partial}{\partial x} \hspace*{-0.02in}\left(\left[\begin{array}{c}
w(x)\\
w'(x)
\end{array}\right]^T \left[\begin{array}{c}
Q_7(x,y) \\
Q_8(x,y)
\end{array}\right]w''(y)\right)\hspace*{-0.02in}dxdy.
\]
\end{proof}
Using Lemma (\ref{lem:sp4}) we can define the following set.
\[
\Xi_4:=\left\{ R_3\text{ as defined in (\ref{spR3})}: \begin{array}{c}
(I_{4n}-D)^T\Theta_3(x) \hspace*{-0.02in}=\hspace*{-0.02in}0\\
\text{ for all }x\in(a,b)
\end{array}\right\}
\]
Thus, for any $R_3\in\Xi_1$ and any $W\in\Lambda$ we have
\[
\int_a^b \int_a^b W(x)^T R_3(x,y)W(y)dxdy=0.
\]
Finally, we can define a set of polynomials, similarly to (\ref{Sigma+}) and (\ref{Sigma-}).
\[
\hspace*{-0.1in}\Sigma_0^{n,d}:=\{(T,R):T\in \Xi_1\text{ and }R\in \sum_{i=2}^4 \Xi_i \}.\numberthis{\label{Sigma0}}
\]

\section{AN LMI CONDITION FOR STABILITY}

In this section we present feasibility problem, solution to which provides a Lyapunov function for (\ref{pde:class}).
\begin{mythm}
\label{thm:final}
Given System (\ref{pde:class}), if there exist
\begin{itemize}
\item $d\in\mathbb{N}$, $\epsilon_1>0$, $\epsilon_2<0$, $(M,N)\in\Sigma_+^{n,d,\epsilon_1}$,
\item $(T,R)\in\Sigma_0^{3n,2d+2+\gamma}$, $(H,G)\in\Sigma_-^{3n,d+\gamma,\epsilon_2}$
\end{itemize}
where $\gamma=\max\{\text{deg}(A),\text{deg}(B),\text{deg}(C)\}$such that for all $x,y\in(a,b)$
\begin{align*}
&\left[\begin{array}{ccc}
K_{11}(x)&M(x)B(x)&M(x)A(x)\\
B(x)^TM(x)&0&0\\
A(x)^TM(x)&0&0
\end{array}\right]\\
&\hspace*{2.2in}=T(x)+H(x),\\
&\left[\begin{array}{ccc}
\hspace*{-0.1in} L_{11}(x,y) & \hspace*{-0.2in}N(x,y)B(y) & \hspace*{-0.1in}N(x,y)A(y)\hspace*{-0.02in}\\
B(x)^TN(x,y)&0&0\\
A(x)^TN(x,y)&0&0
\end{array}\right]\\
&\hspace*{1.95in}=R(x,y)+G(x,y),\\
&K_{11}(x)=C(x)^TM(x)+M(x)C(x),\\
&L_{11}(x,y)=C(x)^TN(x,y)+N(x,y)C(y),
\end{align*}
then (\ref{pde:class}) is stable.
\end{mythm}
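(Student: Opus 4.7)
The plan is to reduce the claim to the Lyapunov stability theorem, Theorem~\ref{thLyap}, applied to the functional $V$ defined via $(M,N)$ by formula (\ref{eq:Lcand}). Taking $l=m=2$, I need to verify three hypotheses: a quadratic lower bound on $V$, a quadratic upper bound, and a dissipation estimate of the form $D^+[V(u(t,\cdot))] \leq -c\|u(t,\cdot)\|_{L_2^n}^2$.

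The lower bound comes directly from Theorem~\ref{Th2}: since $(M,N) \in \Sigma_+^{n,d,\epsilon_1}$, we immediately obtain $V(w) \geq \epsilon_1 \|w\|_{L_2^n}^2$. The upper bound is a routine Cauchy-Schwarz estimate. The entries of $M(x)$ are polynomial on the compact interval $[a,b]$ and thus uniformly bounded, and $N(x,y)$ is polynomial on $[a,b]^2$; applying Cauchy-Schwarz to each of the two terms in (\ref{eq:Lcand}) gives $V(w) \leq \beta \|w\|_{L_2^n}^2$ for a suitable constant $\beta$.

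The heart of the argument is the dissipation estimate. Starting from the expression (\ref{derV}) for the time derivative of $V$ along solutions of (\ref{pde:class}), I substitute the assumed identities $K(x) = T(x) + H(x)$ and $L(x,y) = R(x,y) + G(x,y)$ to split the derivative into four integrals:
\begin{align*}
D^+[V(u(t,\cdot))] =\ & \int_a^b U(t,x)^T T(x) U(t,x)\,dx \\
& + \int_a^b\!\!\int_a^b U(t,x)^T R(x,y) U(t,y)\,dy\,dx \\
& + \int_a^b U(t,x)^T H(x) U(t,x)\,dx \\
& + \int_a^b\!\!\int_a^b U(t,x)^T G(x,y) U(t,y)\,dy\,dx.
\end{align*}
Because $u$ satisfies both the PDE and the boundary condition (\ref{eq:BC}), the vector $U(t,\cdot)$ defined in (\ref{KLU}) lies in the subspace $\Lambda$. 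Since $(T,R) \in \Sigma_0^{3n,2d+2+\gamma}$, Lemmas~\ref{lem:sp1}--\ref{lem:sp4} force the first two integrals to vanish. Since $(H,G) \in \Sigma_-^{3n,d+\gamma,\epsilon_2}$ with $\epsilon_2 < 0$, the remaining two integrals are bounded above by $\epsilon_2 \|U(t,\cdot)\|_{L_2^{3n}}^2$, and because $\|U\|_{L_2^{3n}}^2 \geq \|u\|_{L_2^n}^2$ while $\epsilon_2$ is negative, this is in turn at most $\epsilon_2 \|u(t,\cdot)\|_{L_2^n}^2$. Setting $c = -\epsilon_2 > 0$ yields the required dissipation estimate, and Theorem~\ref{thLyap} then delivers the exponential decay bound.

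The main technical obstacle I anticipate is the bookkeeping needed to confirm that the decomposition $R = R_1 + R_2 + R_3$ with $R_i \in \Xi_{i+1}$ annihilates $\int\!\!\int U^T R U$ simultaneously with the $T$-identity, in particular that the block-structured zeros assumed in each of Lemmas~\ref{lem:sp1}--\ref{lem:sp4} remain consistent when the pieces are summed, and that the degree bound $2d+2+\gamma$ is indeed the right choice once the products $M(x)B(x)$, $M(x)A(x)$, $N(x,y)B(y)$, etc. appearing in $K$ and $L$ are formed. A secondary point to justify carefully is that $U(t,\cdot)$ actually lies in $L_2^{3n}(a,b)$ for each $t > 0$, which is covered by the standing regularity assumption following (\ref{Neumann}). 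Once these structural matches are verified, the conclusion follows by invoking Theorem~\ref{thLyap}.
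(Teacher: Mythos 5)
Your proposal is correct and follows essentially the same route as the paper's own (much terser) proof: lower bound from Theorem~\ref{Th2}, upper bound from continuity of the polynomial kernels, splitting the derivative (\ref{derV}) via $K=T+H$, $L=R+G$ so that the $(T,R)$ part vanishes on $\Lambda$ and the $(H,G)$ part gives the dissipation, then invoking Theorem~\ref{thLyap}. Your added details (that $U(t,\cdot)\in\Lambda$, and that $\epsilon_2\|U\|^2_{L_2^{3n}}\leq\epsilon_2\|u\|^2_{L_2^{n}}$ because $\epsilon_2<0$) are steps the paper leaves implicit, so there is no substantive difference in approach.
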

\begin{proof}
Suppose conditions of the Theorem \ref{thm:final} hold. Then $V$ as defined in (\ref{eq:Lcand}) satisfies (\ref{condV}). Since $M$ and $N$ are polynomials, they are continuous. Thus there exists $b\in\R$ such that
\[
V(w)\leq b\|w\|_{L_2^n}.
\]
According to (\ref{KLU}) and (\ref{derV}) the time derivative of $V$ satisfies
\[
\frac{d}{dt}[V(u(t,\cdot))]\leq \epsilon_2\|w\|_{L_2^n}
\]
and, therefore, we can apply Theorem \ref{thLyap}, which concludes the proof.
\end{proof}


\section{NUMERICAL RESULTS}
\subsection{Example 1: System of Decoupled PDEs.}
Consider the following parameterized coupled PDE. 
\[
u_t(t,x)=\left[\begin{array}{cc}
1 & 0 \\
0 & 1
\end{array}\right]u_{xx}(t,x)+\left[\begin{array}{cc}
\lambda & 0 \\
0 & \lambda
\end{array}\right]u(t,x).\numberthis{\label{ex1}}
\]
The boundary conditions are
\[
u(t,0)=\left[\begin{array}{c}
0\\0
\end{array}\right]\text{ and } u(t,1)=\left[\begin{array}{c}
0\\0
\end{array}\right].
\]
The numerical solution given by MATLAB PDEPE solver implies that for $\lambda=9.8$ (\ref{ex1}) is stable and for  $\lambda=9.9$ (\ref{ex1}) it is unstable. We applied the proposed algorithm to estimate  the maximum $\lambda$ for which (\ref{ex1}) is stable. The results are presented in Table \ref{table example 1}.

\begin{table}
\caption{Maximum $\lambda$ for which (\ref{ex1}) is stable based on the proposed algorithm for different degree $d$ with $\epsilon=0.001$.}
\label{table example 1}
\begin{center}
\begin{tabular}{|c||c|c|c|c|c|c|||c|}
\hline
$d$ & 1 & 2 & 3 & 4 & 5 & 6 &$\lambda_{num}$\\
\hline
$\lambda$ & 5 & 5.8 & 7.4 & 8.1 & 8.1 & 8.1 & 9.8 \\
\hline
\end{tabular}
\end{center}
\end{table}

\subsection{Example 2: System of Coupled PDEs.}
\[
u_t(t,x)=\left[\begin{array}{cc}
1 & 0 \\
0 & 1
\end{array}\right]u_{xx}(t,x)+\left[\begin{array}{cc}
\lambda & 1 \\
1 & \lambda
\end{array}\right]u(t,x)\numberthis{\label{ex2}}
\]
boundary conditions are
\[
u(t,0)=\left[\begin{array}{c}
0\\0
\end{array}\right]\text{ and } u(t,1)=\left[\begin{array}{c}
0\\0
\end{array}\right].
\]
The numerical solution given by MATLAB PDEPE solver yields that for $\lambda=8.8$ (\ref{ex2}) is stable and for  $\lambda=8.9$ (\ref{ex2}) is unstable. We applied the proposed algorithm to calculate the maximum $\lambda$ for which (\ref{ex2}) is stable. The results are presented in Table \ref{table example 2}.

\begin{table}
\caption{Maximum $\lambda$ for which (\ref{ex2}) is stable based on the proposed algorithm for different degree $d$ with $\epsilon=0.001$.}
\label{table example 2}
\begin{center}
\begin{tabular}{|c||c|c|c|c|c|c|||c|}
\hline
$d$ & 1 & 2 & 3 & 4 & 5 & 6 & $\lambda_{num}$\\
\hline
$\lambda$ & 4 & 5.8 & 6.9 & 7.2 & 7.4 & 7.4 & 8.8 \\
\hline
\end{tabular}
\end{center}
\end{table}

\subsection{Example 3: System of Coupled PDEs with Mixed Boundary Conditions.}
Now consider a third parameterized PDE.
\[
u_t(t,x)=\left[\begin{array}{cc}
1 & 0 \\
0 & 1
\end{array}\right]u_{xx}(t,x)+\left[\begin{array}{cc}
\lambda & \lambda \\
\lambda & \lambda
\end{array}\right]u(t,x)\numberthis{\label{ex3}}
\]
The boundary conditions are
\[
u_x(t,0)=\left[\begin{array}{c}
0\\0
\end{array}\right]\text{ and } u(t,1)=\left[\begin{array}{c}
0\\0
\end{array}\right].
\]
The numerical solution given by MATLAB PDEPE solver implies that for $\lambda=15.9$ (\ref{ex3}) is stable and for  $\lambda=16$ (\ref{ex3}) is unstable. We applied the proposed algorithm to calculate the maximum $\lambda$ for which (\ref{ex3}) is stable. The results are presented in Table \ref{table example 3}.

\begin{table}
\caption{Maximum $\lambda$ for which (\ref{ex3}) is stable based on the proposed algorithm for different degree $d$ with $\epsilon=0.001$.}
\label{table example 3}
\begin{center}
\begin{tabular}{|c||c|c|c|c|c|c|||c|}
\hline
$d$ & 1 & 2 & 3 & 4 & 5 & 6 & $\lambda_{num}$ \\
\hline
$\lambda$ & 8.6 & 12.7 & 13.9 & 14.4 & 14.6 & 14.7 & 15.9\\
\hline
\end{tabular}
\end{center}
\end{table}

\subsection{Example 4: System of Coupled PDEs with Spatially Dependent Coefficients.}

For our final example, we consider a coupled PDE with spatially varying coefficients. 
\begin{align*}
u_t(t,x)=&\left[\begin{array}{cc}
5x^2+4 & 0\\
2x^2+7x & 7x^2+6
\end{array}\right]u_{xx}(t,x) \\
&\hspace{0.35in}+\left[\begin{array}{cc}
1 & -4x\\
-3.5x^2 & 0
\end{array}\right]u_x(t,x) \\
&\hspace{0.9in}- \left[\begin{array}{cc}
x^2 & 3 \\
2x & 3x^2
\end{array}\right]u(t,x)
\end{align*}
for all $t>0$, $x\in(0,1)$. Also for all $t>0$,
\[
u(t,0)=\left[\begin{array}{c}
0\\0
\end{array}\right]\text{ and } u(t,1)=\left[\begin{array}{c}
0\\0
\end{array}\right].
\]
Although this PDE is not parameterized, it is stable and our algorithm verified this property using a Lyapunov function with polynomial degree $d = 4$.


\section{CONCLUSIONS AND FUTURE WORKS}

In this paper we have presented a computational framework for stability analysis of coupled linear PDEs with spatially varying coefficients. We parameterized positive SOS Lyapunov functionals defined by multiplier and integral operators which are positive on an interval of the real line. We have enforced negativity of the derivative using a combination of SOS and a parametrization of projection operators defined by the fundamental theorem of calculus. The result is an LMI test for stability which can be implemented using SOSTOOLS coupled with an SDP solver such as Mosek. We applied the proposed framework to several examples of systems of coupled linear PDEs with both constant and spatially varying coefficients and with both Dirichlet and Neumann boundary conditions. The numerical results agreed relatively well with results based on simulation. In future work, we will use this framework to study stability of models such as the accoustic wave equations as well as examine the problem of optimal control and estimation for systems of coupled PDEs. Another step includes extension of the framework to systems with multiple spatial variables as in \cite{meyer2015stability} and include semi-separable kernels to improve accuracy as in \cite{gahlawat2015convex}.

\bibliography{ifacconf}


\addtolength{\textheight}{-3cm}

\end{document}